\documentclass[11pt]{article}
\usepackage{amsmath,wasysym}
\usepackage{amsthm}
\usepackage{amsrefs}
\usepackage{amssymb,amsfonts}

\usepackage{hyperref}
\usepackage{latexsym}
\usepackage{ytableau}
\usepackage{todonotes}

\usepackage{nicefrac}
\usepackage{epsfig}
\usepackage{stmaryrd}

\usepackage{setspace}
\usepackage{tikz-cd}

\usepackage{enumerate}
\usepackage[all]{xypic}
\usepackage{bbm,ifpdf,tikz,mathtools}
\usepackage{tikz-3dplot}

\usetikzlibrary{decorations.pathreplacing}

\ifpdf
\usepackage{pdfsync}
\fi
\usetikzlibrary{positioning,matrix,arrows,decorations.pathmorphing}
\usepackage{enumitem}

\usetikzlibrary{positioning,arrows}
\usetikzlibrary{decorations.markings}

\usepackage{caption, subcaption}
\usetikzlibrary{arrows.meta,
                bbox}

\newtheorem{theorem}{Theorem}[section]

\newtheorem{lemma}[theorem]{Lemma}

\newtheorem{corollary}[theorem]{Corollary}

\newtheorem{conjecture}[theorem]{Conjecture}
\newtheorem*{main-result}{Main-Result}
{

\theoremstyle{definition}
\newtheorem{definition}[theorem]{Definition}
\newtheorem{example}[theorem]{Example}

\newtheorem{remark}[theorem]{Remark}

}

\newcommand{\excise}[1]{}

\newcommand{\CC}{\mathbb{C}}

\newcommand{\QQ}{\mathbb{Q}}

\newcommand{\RR}{\mathbb{R}}

\newcommand{\frakS}{\mathfrak{S}}

\renewcommand{\and}{\qquad\text{and}\qquad}

\newcommand{\FIS}{\operatorname{FI^\sharp}}

\newcommand{\Conf}{\operatorname{Conf}}

\newcommand{\FBmod}{\operatorname{FB-mod}}
\newcommand{\FImod}{\operatorname{FI-mod}}
\newcommand{\FISmod}{\operatorname{FI^\sharp-mod}}
\newcommand{\FB}{\operatorname{FB}}
\newcommand{\FI}{\operatorname{FI}}
\newcommand{\spn}{\operatorname{span}}

\usepackage{todonotes}

\begin{document}

\spacing{1.2}
\noindent{\Large\bf Equivariant log concavity and the \boldmath{$\FIS$}-module structure on \\ $H^i(\Conf(n,\RR^d))$
}\\

\noindent{\bf Benjamin Homan}\footnote{Supported by NSF grant DMS-2039316.}\\
Department of Mathematics, University of Oregon, Eugene, OR\\
\vspace{.1in}

{\small
\begin{quote}
\noindent {\em Abstract.} Previous work has conjectured that the graded $\frakS_n$-representations $H^\bullet(\Conf(n,\RR^d);\QQ)$ are strongly equivariantly log concave, and has proven this conjecture in low degrees.  By leveraging the theory of representation stability, we are able instead prove a stronger statement about the $\FIS$-module structure on $H^i(\Conf(n,\RR^d);\QQ)$ which implies the original conjecture up to degree 19.  We conjecture that this equivariant log concavity-like property holds in all degrees for the $\FIS$-modules $H^i(\Conf(n,\RR^d);\QQ)$.

\end{quote} }

\section{Introduction}

Given a finite group $G$, we say that a graded representation $V^\bullet$ of $G$ is \textbf{strongly equivariantly log concave} if, for all $i<j\leq k<\ell$ with $i+\ell = j+k$, there exists a $G$-equivariant inclusion:
\begin{equation}\label{SELCCondition}
    V^i\otimes V^\ell \hookrightarrow V^j\otimes V^k.
\end{equation}
Previous work has conjectured that particular representations of the symmetric group $\frakS_n$ are strongly equivariantly log concave \cite{KPY17}*{Conjecture 5.3} \cite{MMPR23}*{Conjecture 1.6}. They are the cohomology rings:
\begin{itemize}
    \item $A_n^\bullet:= H^\bullet(\Conf(n, \CC);\QQ)$,
    \item $C_n^\bullet := H^{2\bullet}(\Conf(n, \RR^3);\QQ)$,
\end{itemize}
where $\Conf(n,X)$ is the space of ordered $n$-tuples of distinct points in a topological space $X$.\footnote{One could consider $H^*(\Conf(n,\RR^d);\QQ)$ for any $d$, however when $d$ is even this is isomorphic to $A_n^\bullet$ up to a change in grading.  Likewise, when $d$ is odd we have an isomorphism with $C_n^\bullet$.}  We say that a graded representation is \textbf{strongly equivariantly log concave in degree \boldmath{$m$}} if (\ref{SELCCondition}) holds for all $i<j\leq k<\ell$ with $i+\ell = j+k =m$. Both $A_n$ and $C_n$ are strongly equivariantly log concave in degree $m$ for all $m\leq 14$ \cite{MMPR23}*{Theorem 1.7}. 

In addition to the structure of graded $\frakS_n$-representation, we have natural maps $A^\bullet_n\to A_{n+1}^\bullet$ and $C^\bullet_n \to C^\bullet_{n+1}$ induced by maps $\Conf(n+1,\RR^d) \to \Conf(n,\RR^d)$ that remove points from a configuration. In the language of representation stability, this gives both $A^\bullet$ and $C^\bullet$ the structure of an FI-module. Previous work leveraged this extra FI-module structure when proving that the graded $\frakS_n$-representations $A^\bullet_n$ and $C_n^\bullet$ are strongly equivariantly log concave in low degrees \cite{MMPR23}*{Theorem 1.7}.

In fact, $A^\bullet$ and $C^\bullet$ carry additional structure arising from maps $A^\bullet_n\to A^\bullet_{n-1}$ and $C^\bullet_n\to C^\bullet_{n-1}$.  These maps are not well-defined on the level of spaces, as there is no canonical way to define a map $\Conf(n-1,\RR^d)\to \Conf(n,\RR^d)$ that adds a point to a given configuration.  However, one can loosely imagine these maps as ``adding a point near infinity,'' and they give $A^\bullet$ and $C^\bullet$ a much stronger structure of an $\FIS$-module.  Ultimately, we find that strong equivariant log concavity not only appears when considering $A^\bullet$ and $C^\bullet$ as graded representations, but a similar property also holds at the level of $\FIS$-modules. We conjecture the following:

\begin{conjecture}\label{mainConjecture}
    Given positive integers with $i<j\leq k <\ell$ and $i+\ell = j+k$, there is an inclusion of $\FIS$-modules:
    \[A^i\otimes A^\ell \hookrightarrow A^j\otimes A^k,\]
    and
    \[C^{i}\otimes C^{\ell}\hookrightarrow C^{j}\otimes C^{k}.\]
\end{conjecture}

Further, we prove this conjecture holds in low degree.

\begin{theorem}\label{maintheorem}
    Given positive integers $i<j\leq k<\ell$ with $i+\ell =j+k=m$, for $m\leq 19$ we have an inclusion of $\FIS$-modules
    \[A^i\otimes A^\ell \hookrightarrow A^j\otimes A^k,\]
    and
    \[C^{i}\otimes C^{\ell}\hookrightarrow C^{j}\otimes C^{k}.\]
\end{theorem}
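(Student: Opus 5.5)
The proof rests on the Church--Ellenberg--Farb classification of $\FIS$-modules over $\QQ$. Every finitely generated $\FIS$-module is a direct sum $\bigoplus_\lambda M(W_\lambda)^{\oplus c_\lambda}$ of free modules $M(W_\lambda)=\operatorname{Ind}_{\FB}^{\FI}W_\lambda$, where $\lambda$ ranges over partitions and $W_\lambda$ is the Specht module of $\frakS_{|\lambda|}$. Two consequences drive the argument. First, the category of $\FIS$-modules is equivalent to the category of $\FB$-modules, with $V\mapsto$ its generators. Second, morphisms between free modules are the $\FI$-induced maps of generators. So an injection $V\hookrightarrow V'$ of $\FIS$-modules exists if and only if, for every $\lambda$, the multiplicity of $M(W_\lambda)$ in $V$ is at most its multiplicity in $V'$.

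The plan has four steps.

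\begin{enumerate}
\item \textbf{Tensor products of free modules.} The pointwise tensor product of two $\FIS$-modules is again an $\FIS$-module. For free modules there is the standard formula
\[
M(W_\mu)\otimes M(W_\nu)\cong\bigoplus_{k\ge 0}M\Bigl(\operatorname{Ind}_{\frakS_{a-k}\times\frakS_{b-k}\times\frakS_k}^{\frakS_{a+b-k}}\bigl(\operatorname{Res}W_\mu\boxtimes\operatorname{Res}W_\nu\bigr)\Bigr),
\]
where $a=|\mu|$ and $b=|\nu|$. It comes from decomposing a pair of injections $(S\hookrightarrow T,\ S'\hookrightarrow T)$ according to the size $k$ of the overlap of their images. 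In the formula, $\operatorname{Res}W_\mu$ means restriction to $\frakS_{a-k}\times\frakS_k$, similarly for $W_\nu$, and the two $\frakS_k$ factors are identified diagonally. In $\FB$-terms: if $V=M(U)$ and $V'=M(U')$ with $U,U'$ $\FB$-modules, then $V\otimes V'=M(U\circledast U')$, where
\[
(U\circledast U')_n=\bigoplus_{S_1\sqcup S_2\sqcup S_3=[n]} U_{S_1\sqcup S_3}\otimes U'_{S_2\sqcup S_3}.
\]

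\item \textbf{Reduction to a finite check.} Write $A^i=M(U^i)$ and $C^i=M(D^i)$. Because $A^i_n=0$ generation occurs in degrees $\le 2i$, the generators $U^i$ are concentrated in $n\le 2i$. The proposed inclusion is then equivalent to a coefficientwise inequality of symmetric functions,
\[
\operatorname{ch}(U^i\circledast U^\ell)\le\operatorname{ch}(U^j\circledast U^k).
\]
Both sides vanish above degree $2m$, so for each $m\le 19$ this is a finite computation.
\end{enumerate}

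To get $\operatorname{ch}U^i$, invert the relation $\operatorname{ch}A^i_n=\sum_{r}\operatorname{ch}(U^i_r)\,h_{n-r}$, which follows from $M(W)_n=\operatorname{Ind}(W\boxtimes\mathrm{triv})$. The input $\operatorname{ch}A^i_n$ for $n\le 2i$ is known explicitly, for example from Lehrer--Solomon as sums of inductions from centralizers, or from the Lie operad description. For $C$ the same procedure applies with $\operatorname{Lie}$ replaced by its odd-degree analogue, so that $C^i$ corresponds to the regrading with a sign twist in the Lehrer--Solomon characters.

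\begin{enumerate}\setcounter{enumi}{2}
\item \textbf{Using the product structure to limit computation.} There is a conceptual shortcut: $\bigoplus_i U^i$ is a twisted commutative algebra generated by the $\FB$-modules $\operatorname{Lie}(n)$ placed in degree $n-1$. One might hope to embed $U^i\circledast U^\ell$ into $U^j\circledast U^k$ explicitly. Such an embedding would prove the full conjecture, however, so for the theorem I will fall back on computer verification. The tool is plethystic formulas in Sage: compute $\operatorname{ch}(U\circledast U')$ via the skewing description
\[
\sum_k \sum_{\rho\vdash k}(s_\rho^\perp f)(s_\rho^\perp g)\,\bigl(\text{suitably}\bigr).
\]
Concretely, $\operatorname{ch}(U\circledast U')=\sum_k\langle\cdot\rangle$, built from the coproducts $\Delta f$ and $\Delta g$ paired in the middle factor via the Kronecker product. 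Then check nonnegativity of every coefficient of the difference, for all quadruples with $m\le 19$.

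\item \textbf{The main obstacle.} The hard step is computational size. Generators live in arity up to $2m=38$ before the tensor product is taken, so the internal (Kronecker) products in the overlap factor are very expensive. I will try two reductions first.
\begin{itemize}
\item Show that the highest-arity contributions can be matched directly. The top-arity part of $U^i$ is essentially $\operatorname{Lie}$-type pieces, and an inclusion there can be verified structurally.
\item Use that it suffices to compare products in which the overlap $k$ is small relative to the arities. This pushes the explicit computation down to manageable arities.
\end{itemize}
If neither works cleanly, I will do a brute-force computation organized by the grading $m$ and by arity. That computation presumably determines the cutoff $m\le 19$.
\end{enumerate}
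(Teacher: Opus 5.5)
Your reduction is the paper's: by Theorem~\ref{FBFISEquivalence}, an $\FIS$-inclusion exists iff $H_0(A^i\otimes A^\ell)\subseteq H_0(A^j\otimes A^k)$ as $\FB$-modules. Both sides vanish above arity $2m$, and what remains is a finite machine check.

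Where you genuinely differ is in how $H_0$ of a tensor product is computed. The paper forms $(A^i\otimes A^\ell)_n=A^i_n\otimes A^\ell_n$ as an inner tensor product in $\frakS_n$ for every $n\le 2m$. It then strips off the free part with the recursion $H_0(V)_n=V_n/M(H_0(V)_{<n})_n$ (Example~\ref{H0Calculation}). You instead compute $U^i=H_0(A^i)$ once, which needs $A^i_n$ only for $n\le 2i$. You then apply the closed formula $M(U)\otimes M(U')\cong M(U\circledast U')$, obtained by grouping pairs $(S,S')$ by $S\cup S'$.

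That formula is correct as an isomorphism of $\FIS$-modules, not just of $\FB$-modules. A partial injection kills the $(S,S')$-summand exactly when it is undefined somewhere on $S\cup S'$. Hence $H_0(A^i\otimes A^\ell)=U^i\circledast U^\ell$, and Lemma~\ref{generatingBound} becomes immediate.

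What each route buys: yours avoids inner products in $\frakS_n$ for $n$ up to $38$ and explains the $2m$ bound structurally. The paper's route needs nothing beyond the defining recursion and Sage's built-in Kronecker products.

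Two repairs are needed. First, the skewing formula as you wrote it, with $s_\rho^\perp$ on both factors, is wrong. Either keep the Kronecker coefficients, $\sum_{\rho,\sigma}(s_\rho^\perp f)(s_\sigma^\perp g)\,(s_\rho * s_\sigma)$ with $*$ the Kronecker product, or diagonalize in power sums:
\[
\operatorname{ch}(U\circledast U')=\sum_{\mu}z_\mu^{-1}\,p_\mu\,(p_\mu^\perp f)(p_\mu^\perp g).
\]
The power-sum form makes the overlap factor free and removes the obstacle of your step 4. For $f=g=h_2$ it gives $h_2+h_1^3+h_2^2$. Accordingly, the arity-$3$ entry of the $H_0$ row in Example~\ref{H0Calculation} should also contain $V_{(1,1,1)}$, as the rows above it force.

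Second, drop the two ``reductions'' of step 4; they are unsupported and unnecessary. Large-overlap terms add to the multiplicities in lower arities on both sides, and the inequality concerns those totals, so they cannot be ignored.

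Finally, two small corrections. The cutoff $m\le 19$ is a computational limit, not a point where the inequality is expected to fail. And the factor generators reach only arity $2\ell\le 36$; only the product reaches $38$.
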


\begin{corollary}\label{maincorollary}
    For all $n$, the graded representations $A_n^\bullet$ and $C_n^{\bullet}$ are strongly equivariantly log concave up to degree 19.
\end{corollary}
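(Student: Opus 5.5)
The corollary follows from Theorem~\ref{maintheorem} by evaluating the $\FIS$-module inclusions at the finite set $[n]$. For each $i<j\leq k<\ell$ with $i+\ell=j+k=m\leq 19$, the theorem gives an injective morphism of $\FIS$-modules
\[\phi\colon A^i\otimes A^\ell \hookrightarrow A^j\otimes A^k,\]
and similarly for $C$. The plan is to recall how the tensor product of $\FIS$-modules (equivalently of $\FI$-modules) is defined: it is the pointwise tensor product, with
\[(V\otimes W)_n = V_n\otimes W_n,\]
the diagonal $\frakS_n$-action, and structure maps given by tensoring the structure maps of $V$ and $W$. I would state this explicitly, since it is the only point where the conventions matter. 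If instead a Day-convolution-type tensor product were intended, the evaluation at $n$ would not be $V_n\otimes W_n$ and this argument would need adjusting. The intended meaning here, matching (\ref{SELCCondition}), is the pointwise one.

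Next, a morphism of $\FIS$-modules is a natural transformation, so its component at the object $[n]$ is a linear map
\[\phi_n\colon A^i_n\otimes A^\ell_n\to A^j_n\otimes A^k_n.\]
Naturality with respect to the automorphisms of $[n]$, which are the elements of $\frakS_n$, says precisely that $\phi_n$ is $\frakS_n$-equivariant. An inclusion (monomorphism) of $\FIS$-modules over $\QQ$ is injective objectwise, because kernels in a functor category into vector spaces are computed pointwise. Hence each $\phi_n$ is an $\frakS_n$-equivariant injection, which is exactly condition (\ref{SELCCondition}) for $V^\bullet=A^\bullet_n$ in degree $m$. The same argument applies verbatim to $C_n^\bullet$.

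Ranging over all $m\leq 19$ and all admissible $i,j,k,\ell$ then gives strong equivariant log concavity of $A_n^\bullet$ and $C_n^\bullet$ in every degree up to $19$, for every $n$. The degenerate indices need a remark. Theorem~\ref{maintheorem} is stated for positive integers, while (\ref{SELCCondition}) allows $i=0$. In that case $A^0=C^0$ is the trivial one-dimensional module in every arity, so the required statement is
\[A^\ell_n\hookrightarrow A^j_n\otimes A^k_n \quad\text{with } j+k=\ell.\]
I would handle this by using the unital multiplication in cohomology, or cite it from \cite{MMPR23} if it is already treated there. This edge case is the only step that is not purely formal, so I expect it to be the main obstacle, mild as it is. If it cannot be closed cleanly, the fallback is to state the corollary for positive indices, which is what the theorem directly provides.
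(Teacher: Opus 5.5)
Your argument is correct and is the same as the paper's, which treats the corollary as immediate: evaluate the $\FIS$-inclusions of Theorem~\ref{maintheorem} at $[n]$, use that the tensor product is pointwise (Definition~\ref{tensorFIFBdef}), and note that naturality at the automorphisms of $[n]$ gives $\frakS_n$-equivariance. The $i=0$ case you flag is not addressed in the paper, and unitality alone does not settle it; what settles it is that $A_n^\bullet$ and $C_n^\bullet$ are generated in degree $1$ as algebras, so the cup product $A^j_n\otimes A^k_n\to A^{j+k}_n$ (and likewise for $C$) is an $\frakS_n$-equivariant surjection, which splits over $\QQ$ by Maschke's theorem to give $A^{\ell}_n\hookrightarrow A^j_n\otimes A^k_n$, so your fallback is not needed.
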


While \cite{MMPR23}*{Theorem 1.7} proves that these inclusions are $\frakS_n$-equivariant, we show that they also respect the rigid $\FIS$-module structures on $A^i$ and $C^i$. As we will see in Remark \ref{stableBound}, the stronger $\FIS$-module structure can be completely determined with fewer computations.  Hence, we are able to prove Conjecture \ref{mainConjecture} up to degree 19 and obtain strong equivariant log concavity of the graded representations as a corollary.

First, we review the necessary results of representation theory that make these calculations possible. Then, by using the equivalence of the categories $\FBmod$ and $\FISmod$, we observe that the determining the $\FIS$-module structures of $A^i\otimes A^j$ and $C^i\otimes C^j$ requires fewer computations than explicitly calculating them as $\FB$-modules.  Finally, we describe the algorithm that calculates the $\FIS$-module structures of $A^i\otimes A^j$ and $C^i\otimes C^j$ and proves Theorem \ref{maintheorem}. We run this algorithm using the software package SageMath \cite{Sage}.

\vspace{\baselineskip}
\noindent
{\em Acknowledgments:}
We are grateful to Nicholas Proudfoot for his mentorship, guidance, and comments throughout the preparation of this manuscript. We would also like to thank Eric Ramos for the insights into determining the structure of a tensor product of $\FIS$-modules. Finally, we thank Galen Dorpalen-Barry and the Texas A\&M compute cluster Whistler for providing the computing power necessary to prove Theorem \ref{maintheorem}.

\section{Representation Stability}

In this section we review the definitions and properties of $\FB$, $\FI$, and $\FIS$-modules. For a more thorough treatment see \cite{CEF15}.  Given a partition $\lambda$ of a positive integer $n$, we denote by $V_\lambda$ the corresponding irreducible $\mathfrak{S}_n$-representation.  For the sake of this paper, all representations will be over the field $\mathbb{Q}$. 

\subsection{Stabilization of \boldmath$\FB$-Modules}

For any fixed integer $i\geq 0$, we can view $A^i$ and $C^i$ as a sequence of $\frakS_n$-representations $(A^i_n)$ and $(C^i_n)$ by varying $n$. A categorical description of this data is an \boldmath{$\FB$}\unboldmath\textbf{-module}:

\begin{definition}\label{defFBmod}
    Let $\FB$ be the category whose objects are finite sets whose morphisms are bijections between the sets.  An \boldmath{$\FB$}\textbf{-module}\unboldmath ~is any functor from $\FB$ to $k$-Mod for some commutative ring $k$.\footnote{For this paper we will only consider $\QQ$-modules.}  Given an $\FB$-module $V$, we will denote $V_n := V[n]$ where $[n]=\{1,2,\dots n\}$.
\end{definition}

\begin{example}\label{FBmod}
    Given a sequence $\{W_n\}$ of $\mathfrak{S}_n$-representations, we can construct an $\FB$-module $W$ by declaring $W[n] = W_n$.  Then, for any bijection $\sigma:[n]\to [n]$, $\sigma$ acting on $W_n$ defines the map $W(\sigma):W_n\to W_n$.  In particular, this gives $A^i$ and $C^i$ the structure of an $\FB$-module for any fixed integer $i\geq 0$, as $A^i_n$ and $C^i_n$ have an $\frakS_n$-action induced by permuting the points in a configuration.
\end{example}

Some $\FB$-modules exhibit an interesting stabilization phenomenon.  As an example, consider $A^1=H^1(\Conf(-,\RR^2),\mathbb{Q})$:

\begin{example}\label{stabilityEx}
   For this example, let the Young diagram of a partition represent the corresponding irreducible $\frakS_n$-representation.
    \begin{center}
        \begin{tabular}{c||c}
            $n$ & $A^1$ \\
            \hline
            & \\
            $1$ &  $0$ \\
            & \\
            $2$ & $\ydiagram{2}$ \\
            & \\
            $3$ & $\ydiagram{3} \bigoplus  \ydiagram{2,1}$\\
            & \\
            $4$ & $\ydiagram{4} \bigoplus \ydiagram{3,1} \bigoplus \ydiagram{2,2}$ \\
            & \\
            $5$ & $\ydiagram{5} \bigoplus \ydiagram{4,1} \bigoplus \ydiagram{3,2}$ \\
            & \\
            $6$ & $\ydiagram{6} \bigoplus \ydiagram{5,1} \bigoplus \ydiagram{4,2}$ \\
        \end{tabular}
    \end{center}

    The table continues for all $n$, but notice that for $n>4$, the tableaux indexing the decomposition of $A^1_n$ come from adding a box to the first row of those in the decomposition of $A^1_{n-1}$. We call this phenomenon \textbf{stabilization}. 
    
\end{example}

\begin{definition}\label{stabilizationDef}
    Suppose $V$ is an $\FB$-module such that the irreducible composition of $V_{m}$ is given by:
    \[V_m = \bigoplus_{\lambda\vdash m} (V_\lambda)^{\oplus c_\lambda}.\]
    We say that $V$ \textbf{stabilizes at} \boldmath{$m $}\unboldmath  ~if for all $n>m$:
    \[V_n = \bigoplus_{\lambda\vdash m}(V_{(\lambda_1+(n-m),\lambda_2,\dots,\lambda_\ell)})^{\oplus c_\lambda}.\]
\end{definition}

Example \ref{stabilityEx} is no coincidence; for all $i\geq 0$, $A^i$ and $C^i$ will stabilize \cite{C2012}*{Theorem 1}, and this stabilization was key to the proof of \cite{MMPR23}*{Theorem 1.7}.  To understand this behavior, we must add additional structure to our $\FB$-modules.

\begin{definition}\label{defFImod}
    Let $\FI$ be the category of finite sets with injections. An \textbf{FI-module} is a functor $\text{FI}\to k\text{-mod}$ for some commutative ring $k$.
\end{definition}

\begin{example}\label{FreeFIModDef}
    For any $m\geq 0$, we define an $\FI$-module \boldmath$M(m)$ \unboldmath that assigns to any finite set $S$ the $\QQ$-module with basis given by injections $[m]\hookrightarrow S$.  Then, any injection of finite sets $\varphi:S\to T$ induces a map $\varphi_*:M(m)(S)\to M(m)(T)$. When $m=0$, there is only one injection $\emptyset \hookrightarrow S$. Thus, $M(0)$ is a constant $\FI$-module assigning to each finite set the trivial representation of $\frakS_n$.  Additionally, $M(1)_n$ has a basis indexed by elements of $[n]$, since injections $\{1\} \hookrightarrow [n]$ are completely determined by the image of $1$.  The $\mathfrak{S_n}$-action on $M(1)_n$ is given by permuting the index set $[n]$, and thus $M(1)_n$ is the permutation representation of $\frakS_n$.
\end{example}

\begin{example}\label{exFImod}
     For any fixed $i$, $A^i$ and $C^i$ have the structure of an $\FI$-module.  Given an injection $f:[m]\to [n]$, we define a map $\Conf(n,\RR^d) \to \Conf(m,\RR^d)$ by $(p_1,\dots, p_n) \mapsto (p_{f(1)},\dots,p_{f(m)})$.  The induced maps on cohomology are $\QQ$-module homomorphisms $A^i(f): A^i_m\to A^i_n$ and $C^i(f): C^i_m\to C^i_n$.  When $f: [n]\to[n+1]$ this corresponds to removing a point from the configuration as discussed in the introduction.
\end{example}

\begin{remark}\label{FIhasFBStructure}
    There is a forgetful functor is induced by the inclusion $\FB\hookrightarrow\FI$.  We denote this functor $\pi:\FImod\to \FBmod$ and it forgets all non-bijective morphisms.
\end{remark}

The additional structure of an $\FI$-module allows for a characterization of the $\FB$-modules which stabilize. 

\begin{definition}{\cite{CEF15}*{Proposition 2.3.5}}\label{finGenFI}
    An $\FI$-module $V$ is \textbf{finitely generated}\footnote{This is not the definition used in \cite{CEF15}*{Definition 2.3.4}. However, \cite{CEF15}*{Proposition 2.3.5} gives that this characterization is equivalent.} if there exists a finite sequence of integers $\{m_i\}$ and a surjection:
    \[\bigoplus_{i} M(m_i) \twoheadrightarrow V,\]
    where $M(m_i)$ is the $\FI$-module defined in Example \ref{FreeFIModDef}.
\end{definition}

\begin{theorem}{\cite{CEF15}*{Theorem 1.13}}\label{finGenEqualsStable}
    If $V$ is an $\FB$-module over a field of characteristic $0$, then $V$ stabilizes if and only if there exists a finitely generated $\FI$-module $W$ such that $V=\pi(W)$.
\end{theorem}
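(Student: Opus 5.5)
The statement to prove is the Church--Ellenberg--Farb equivalence: an $\FB$-module $V$ over a field of characteristic $0$ stabilizes (Definition \ref{stabilizationDef}) if and only if $V=\pi(W)$ for some finitely generated $\FI$-module $W$. I would prove the two directions separately, using the free modules $M(m)$ of Example \ref{FreeFIModDef} and their $\frakS$-equivariant refinements as the main tool.

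\emph{Finitely generated $\FI$-module implies stabilization.} First I would compute the free pieces. Refine $M(m)$ to $M(U)=M(m)\otimes_{\QQ\frakS_m}U$ for an $\frakS_m$-representation $U$. Then
\[M(U)_n=\operatorname{Ind}_{\frakS_m\times\frakS_{n-m}}^{\frakS_n}(U\boxtimes \mathrm{triv}),\]
and by Pieri's rule, for $U=V_\lambda$ this is the sum of all $V_\mu$ with $\mu/\lambda$ a horizontal strip. This multiset of $\mu$'s, written via the padded partition $\lambda[n]=(n-|\lambda|,\lambda)$, stabilizes once $n\ge |\lambda|+\lambda_1$. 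Hence every finitely generated projective $\FI$-module (a finite sum of such $M(V_\lambda)$) stabilizes.

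A general finitely generated $W$ is a quotient of a finite sum of $M(m_i)$. To pass to quotients I would invoke the Noetherian property of $\FI$-modules in characteristic $0$: the kernel $K$ is again finitely generated, so $W$ has a finite presentation $P_1\to P_0\to W\to 0$ with $P_1,P_0$ free. This is where characteristic $0$ is essential, since it makes $\QQ\frakS_m$ semisimple and lets every free module split into $M(V_\lambda)$'s.

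The hardest step is showing that the cokernel of a map $P_1\to P_0$ between such modules has stabilizing multiplicities. I would attack it with the $\FIS$ trick. Over $\QQ$, the character of $W_n$ is eventually given by a single character polynomial; equivalently, the coinvariants and the stable decomposition are controlled by finitely many generator and relation degrees. Concretely, I would show that for $n$ at least the sum of the generation and relation degrees, the multiplicity of $V_{\lambda[n]}$ in $W_n$ is independent of $n$, using that the maps $P_1\to P_0$ commute with the shift functor and comparing $\frakS_{n-1}$-restrictions. This is the Church--Ellenberg--Farb argument, and I would expect to cite \cite{CEF15} for it rather than reproduce it.

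\emph{Stabilization implies $\pi(W)$ for a finitely generated $W$.} Suppose $V$ stabilizes at $m$, with $V_m=\bigoplus_\lambda V_\lambda^{\oplus c_\lambda}$. Define
\[W=\bigoplus_{\lambda\vdash m}\bigoplus_{k<m}\cdots\]
more precisely, take the $\FIS$-module, or equivalently the $\FI$-module, built from the $\FB$-module $U$ with $U_k=V_{\bar\lambda}$ and multiplicity $c_\lambda$ in degree $k=|\bar\lambda|$, where $\bar\lambda$ denotes $\lambda$ with its first row deleted. The module $M(U)$ is then finitely generated, and $M(V_{\bar\lambda})_n$ contains $V_{\bar\lambda[n]}$, but it also contains other pieces.

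To cancel the extra pieces I would use instead the $\FI$-modules $L(\bar\lambda)$ given by the Specht-type modules $n\mapsto V_{\bar\lambda[n]}$. These are finitely generated $\FI$-modules: they are quotients of $M(V_{\bar\lambda})$ obtained by killing every non-top Pieri summand, and the kernel is a sub-$\FI$-module because the transition maps respect the row-length filtration. Then I would set
\[W=\bigoplus_\lambda L(\bar\lambda)^{\oplus c_\lambda}\;\oplus\;\bigoplus_{k\le m}(\text{small-}n\text{ corrections}).\]
The corrections are torsion $\FI$-modules supported in the finitely many degrees $n\le m$: they equal $V_n$ minus the stable part, with all transition maps zero. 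Torsion modules supported in finitely many degrees are finitely generated, and the underlying $\FB$-module of $W$ is isomorphic to $V$, as required. The one check needed is that $L(\bar\lambda)$ is a genuine $\FI$-module, i.e.\ that the kernel above is closed under the transition maps. I would verify this via the $\FIS$-equivalence, realizing $L(\bar\lambda)$ as the image of $M(V_{\bar\lambda})$ under a projection compatible with all injections.
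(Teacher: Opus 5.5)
The paper gives no argument for this statement; it simply imports it from \cite{CEF15}. Deferring the cokernel step of your ``finitely generated $\Rightarrow$ stabilizes'' direction to Church--Ellenberg--Farb is therefore on the same footing as the paper.

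\textbf{The converse fails as written.} The problem sits exactly at the check you single out. With $\nu=\bar\lambda$, $L(\nu)$ is \emph{not} a quotient of $M(V_\nu)$, because the non-top Pieri summands are not closed under the transition maps.
\begin{itemize}
\item Already for $\nu=(1)$ this fails. We have $M(1)_n=\QQ^n=V_{(n)}\oplus V_{(n-1,1)}$, and the non-top summand is the line through $e_1+\cdots+e_n$. The standard inclusion sends this vector to $e_1+\cdots+e_n\in\QQ^{n+1}$, which is not on the line through $e_1+\cdots+e_{n+1}$. Moreover, in degree $1$ everything is non-top, and $e_1$ generates all of $M(1)$.
\item In general, $M(V_\nu)$ is generated by its degree-$|\nu|$ piece. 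So any quotient vanishing in degree $|\nu|$ is zero. But $L(\nu)$ must vanish there when $\nu\neq\emptyset$, since $(0,\nu)$ is not a partition.
\item The $\FIS$-equivalence cannot supply your ``projection compatible with all injections.'' Such a projection would be an idempotent in $\operatorname{End}_{\FI}(M(V_\nu))\cong\operatorname{End}_{\frakS_{|\nu|}}(V_\nu)=\QQ$, and the only idempotents there are $0$ and $1$.
\end{itemize}

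\textbf{How to repair it.} The row-length filtration runs the other way: the top summands form a \emph{sub}-$\FI$-module.
\begin{itemize}
\item The $\frakS_{n+1}$-span of the image of $V_{\nu[n]}$ under $[n]\hookrightarrow[n+1]$ is a quotient of $\operatorname{Ind}_{\frakS_n}^{\frakS_{n+1}}V_{\nu[n]}$. The constituents of this induced module are the shapes obtained from $\nu[n]$ by adding one box. The only one of these that is a horizontal strip over $\nu$ is $\nu[n+1]$.
\item $M(V_\nu)_{n+1}$ is multiplicity-free and the transition maps of $M(V_\nu)$ are injective. Hence this span is exactly $V_{\nu[n+1]}$.
\item Consequently $L(\nu)\subseteq M(V_\nu)$ exists and is generated in degree $|\nu|+\nu_1\le m$.
\end{itemize}
Your correction step also needs fixing. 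For $n<m$, $V_n$ need not contain $\bigoplus_\lambda L(\bar\lambda)_n^{\oplus c_\lambda}$; for instance, take $V_n=0$ for all $n<m$. So ``$V_n$ minus the stable part'' may not exist. Instead, use that the degrees $\ge m$ of an $\FI$-module form a submodule, and set $W=\bigoplus_\lambda (L(\bar\lambda)_{\ge m})^{\oplus c_\lambda}\oplus V_{<m}$, where $V_{<m}$ carries zero non-bijective maps.

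Finally, $V_{<m}$ is finitely generated only if each $V_n$ is finite-dimensional. Your claim that torsion modules supported in finitely many degrees are finitely generated depends on this. The hypothesis is tacit in the statement: without it, ``stabilizes $\Rightarrow$ finitely generated'' fails trivially.
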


Importantly, both $A^i$ and $C^i$ are finitely generated $\FI$-modules and thus the $\FB$-modules $\pi(A^i)$ and $\pi(C^i)$ stabilize \cite{C2012}*{Theorem 1}.  In fact, there are sharp bounds for this stabilization:

\begin{theorem}{\cite{HR17}*{Theorem 1.1}}\label{sharpStabilityBound} For all integers $i\geq 0$, the $\FB$-modules $A^i$ and $C^i$ stabilize sharply at $n=3i+1$ and $n=3i$, respectively.
\end{theorem}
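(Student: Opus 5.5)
The statement is Theorem \ref{sharpStabilityBound}, quoted from \cite{HR17}. To prove it I would work with the $\FIS$-module structure that $A^i$ and $C^i$ carry, as mentioned in the introduction. By the structure theory of \cite{CEF15}, every $\FIS$-module over $\QQ$ is a direct sum of induced modules $M(W) = \operatorname{Ind}(W)$, where $W$ ranges over $\frakS_a$-representations. So I would write
\[A^i \cong \bigoplus_{a} M(W^i_a),\]
where $W^i_a$ is a $\frakS_a$-representation. Concretely, $W^i_a$ is the part of $A^i_a$ that is not pulled back from forgetting points. Then $M(W)_n = \operatorname{Ind}_{\frakS_a\times\frakS_{n-a}}^{\frakS_n} W\boxtimes \mathrm{triv}$, and by Pieri's rule $M(V_\mu)_n$ is the sum of $V_\lambda$ over all $\lambda$ obtained from $\mu$ by adding $n-a$ boxes, no two in the same column. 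This decomposition stabilizes exactly once $n \ge a + \mu_1$, that is, once the top row can absorb all the added boxes. Therefore the sharp stable range of $A^i$ is
\[\max\{a+\mu_1 : V_\mu \subset W^i_a\}.\]

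The first key step is to identify the generators. $H^\bullet(\Conf(n,\RR^2))$ is the Arnold algebra, which is generated by the classes $w_{jk}$. A degree-$i$ monomial involves at most $2i$ points, so $W^i_a = 0$ for $a>2i$. The top piece $W^i_{2i}$ is spanned by products of $i$ disjoint classes $w_{jk}$, i.e. the perfect-matching part. This is $\operatorname{Ind}_{\frakS_2\wr\frakS_i}^{\frakS_{2i}}$ of a sign-twisted character: the generator $w_{12}$ spans the trivial representation of $\frakS_2$ (since $w_{jk}=w_{kj}$), and the odd classes anticommute. For $C^i$ the same description holds, but the classes have even degree and $w_{jk}=-w_{kj}$, so the characters are swapped.

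The second step is to bound $\mu_1$ for constituents $V_\mu\subset W^i_a$. For the matching piece with $a=2i$, I would use the plethysm description: $W^i_{2i}$ corresponds to $e_i[h_2]$ in the $A$ case, and to the analogous plethysm $h_i[e_2]$ or $e_i[e_2]$ in the $C$ case. The largest possible first row then has to be computed. For example, $e_i[h_2]$ has maximal first row $i+1$, attained by the partition $(i+1,1^{i-1})$. This gives the bound $2i+i+1=3i+1$ for $A$. For $C$ the first row is at most $i$, giving $3i$. For smaller $a$ I would use the analogous description of the generators via the Lie/forest basis (the Orlik–Solomon nbc basis), which should show $a+\mu_1$ is smaller.

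The main obstacle is the sharpness and the uniform bound across all $a<2i$. That is, I need to show $\mu_1 \le 3i+1-a$ for every constituent of $W^i_a$, and that equality is actually attained. I would try to handle the first part by induction on $i$, using the fibration $\Conf(n,\CC)\to\Conf(n-1,\CC)$. Its Leray–Hirsch splitting gives
\[H^i_n \cong H^i_{n-1}\oplus H^{i-1}_{n-1}\otimes\QQ^{n-1},\]
and this lets me track how much the first row can grow. Attainment would come from exhibiting the constituent of $e_i[h_2]$ described above, and the corresponding one for $C$.
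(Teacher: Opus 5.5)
Your reduction is correct, and it uses the same tools the paper borrows from \cite{HR17}; the paper itself gives no argument beyond the citation. With $A^i=M(W^i)$, a summand $M(V_\mu)$ with $\mu\vdash a$ stabilizes sharply at $a+\mu_1$, because $V_{(\mu_1,\mu)}$ (first two rows equal) first appears at $n=a+\mu_1$. So the sharp range is $\max(a+\mu_1)$. Your perfect-matching analysis is also right. For $C$ the correct plethysm is $h_i[e_2]$: all columns even, extremal constituent $(i,i)$. But that analysis only gives the \emph{lower} bound, i.e.\ sharpness. The stabilization claim itself is the upper bound $\mu_1\le 3i+1-a$ (resp.\ $3i-a$) for every constituent of every $W^i_a$, including all mixed block types, and you leave this at ``should show''. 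The induction you sketch does not deliver it as stated. The Leray--Hirsch splitting is only $\frakS_{n-1}$-equivariant and concerns $A^i_n$, not the generators. If you track $n-\lambda_1+\lambda_2$ on constituents of $A^i_n$, the extremal constituents (those with $\lambda_1=\lambda_2$) drop by at least $1$ under every branching. So an inductive bound of $3i+1$ on $A^i_{n-1}$ only returns $3i+2$ for them. To rescue this route you would need two further inputs:
\begin{itemize}
\item push the splitting down to generators via Mackey, obtaining $\operatorname{Res}^{\frakS_a}_{\frakS_{a-1}}W^i_a\cong\operatorname{Ind}_{\frakS_{a-2}\times\frakS_1}^{\frakS_{a-1}}\bigl(W^{i-1}_{a-2}\boxtimes\QQ\bigr)\oplus\bigl(W^{i-1}_{a-1}\otimes\QQ^{a-1}\bigr)$;
\item show separately that $W^i_a$ has no trivial constituent for $i\ge 2$ (resp.\ $i\ge 1$), since restriction lowers the first row of $V_{(a)}$.
\end{itemize}

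The more direct fix is the one you only gesture at. By the decomposition of the cohomology ring by flats, $W^i_a=\bigoplus_\pi H_\pi$ over set partitions $\pi$ of $[a]$ into $a-i$ blocks with no singletons, and $H_\pi\cong\bigotimes_{B\in\pi}H^{\mathrm{top}}_B$. Fix a type with $m$ blocks of size $2$ and $r$ blocks of size $\ge 3$. Permuting the $2$-blocks only reorders degree-one (resp.\ even-degree) classes. So the summand is an induction product of $e_m[h_2]$ (resp.\ $h_m[e_2]$) with some representation of $\frakS_{a-2m}$. Since $c^\lambda_{\alpha\beta}\ne 0$ forces $\lambda_1\le\alpha_1+\beta_1$, every constituent has $\mu_1\le (m+1)+(a-2m)$ (resp.\ $m+(a-2m)$). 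With $i=a-m-r$ and $a-2m\ge 3r$, this gives $a+\mu_1\le 2a-m+1\le 3i+1$ (resp.\ $2a-m\le 3i$). The exact sign on permuting $2$-blocks matters: with $h_m[h_2]$ in place of $e_m[h_2]$ the first row would be $2m$ and the bound would fail. This uniform upper bound is the step your proposal is missing.
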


The formulation of a strong equivariant log concavity property akin to (\ref{SELCCondition}) requires a notion of the tensor product of $\FB$-modules:

\begin{definition}\label{tensorFIFBdef}
    Given two $\FB$-modules $V$ and $W$, their tensor product $V\otimes W$ is the $\FB$-module assigning:
    \[(V\otimes W)_n = V_n \otimes W_n.\]
    If $V$ and $W$ are $\FI$-modules, then $V\otimes W$ also has an $\FI$-module structure; an injection $f: [m]\to [n]$ defines a map $V\otimes W(f):=V(f)\otimes W(f):V_m\otimes W_m \to V_n\otimes W_n$.
\end{definition}

If two $\FB$-modules stabilize, it follows that their tensor product will stabilize:

\begin{theorem}{\cite{MMPR23}*{Theorem 3.3}}\label{tensorStabilizationDegree}
    If $\FB$-modules $V$ and $W$ stabilize at $n$ and $m$, respectively, then $V\otimes W$ stabilizes at $n+m$.
\end{theorem}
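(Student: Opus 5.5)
The plan is to reduce the statement to a computation with the character polynomials of stable multiplicities. Since $V$ stabilizes at $n$ and $W$ stabilizes at $m$, Theorem \ref{finGenEqualsStable} gives finitely generated $\FI$-modules inducing $V$ and $W$. It is convenient to work with the padded-partition description. For a partition $\lambda$, write $\lambda[N] = (N-|\lambda|,\lambda_1,\lambda_2,\dots)$. Stabilization at $n$ means there are multiplicities $c_\lambda$, with $c_\lambda\neq 0$ only when $|\lambda|+\lambda_1\le n$ (so that $\lambda[n]$ is a partition), such that $V_N=\bigoplus_\lambda V_{\lambda[N]}^{\oplus c_\lambda}$ for all $N\ge n$. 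Similarly $W_N=\bigoplus_\mu V_{\mu[N]}^{\oplus d_\mu}$ for $N\ge m$, with $|\mu|+\mu_1\le m$. By bilinearity of the tensor product, it suffices to prove the statement for a single pair $V_{\lambda[N]}\otimes V_{\mu[N]}$, with $|\lambda|+\lambda_1\le n$ and $|\mu|+\mu_1\le m$. We must show that this family of tensor products stabilizes at $n+m$.

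The key input is the stability theorem for Kronecker coefficients (Murnaghan's theorem), in the sharp form due to Briand–Orellana–Rosas. The Kronecker coefficients $g_{\lambda[N],\mu[N]}^{\nu[N]}$ are eventually independent of $N$. Moreover, only finitely many $\nu$ occur, namely those with $|\nu|\le|\lambda|+|\mu|$. To show stabilization at $n+m$, it is enough to verify two things for $N\ge n+m$:
\begin{enumerate}[label=(\roman*)]
\item every constituent of $V_{\lambda[N]}\otimes V_{\mu[N]}$ has the form $\nu[N]$, with $\nu$ in the finite stable set and $|\nu|+\nu_1\le n+m$;
\item the coefficient $g_{\lambda[N],\mu[N]}^{\nu[N]}$ already equals its stable value.
\end{enumerate}

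For (i), I would use the standard bound that a constituent $\rho$ of $V_\alpha\otimes V_\beta$ satisfies $N-\rho_1\le (N-\alpha_1)+(N-\beta_1)$. This follows, for example, from Littlewood's formula, or from realizing $V_\alpha$ inside the induced representation from $\frakS_{\alpha_1}\times\frakS_{N-\alpha_1}$. Applying it gives $|\nu|\le|\lambda|+|\mu|$. Column-length considerations then give $\nu_1\le\lambda_1+\mu_1$, and combining the two yields $|\nu|+\nu_1\le n+m$.

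For (ii), I would invoke the Briand–Orellana–Rosas bound on the stabilization point of Kronecker coefficients. This bound is expressed in terms of $|\lambda|+\lambda_1$ and $|\mu|+\mu_1$ and is at most $n+m$. Alternatively, one can argue via $\FI$-modules: $V_{\lambda[\bullet]}$ is a quotient of the free $\FI$-module $M(|\lambda|)$ that is generated in degree $|\lambda|$ and presented in degree $|\lambda|+\lambda_1$, and one would then analyze $M(a)\otimes M(b)$ as a sum of $M(c)$'s for $c\le a+b$.

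I expect step (ii) to be the main obstacle, namely getting the sharp stabilization threshold $n+m$ rather than a weaker one. My first attempt would be to cite the sharp Kronecker stability bound directly and then check by hand that its hypotheses reduce to $N\ge n+m$.
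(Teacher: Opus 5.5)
\textbf{There is a genuine gap, in step (i).} The paper gives no proof of this statement; it quotes it from \cite{MMPR23}, so your proposal has to stand on its own. Your reduction is sound. By bilinearity it suffices to treat $V_{\lambda[N]}\otimes V_{\mu[N]}$ with $|\lambda|+\lambda_1\le n$ and $|\mu|+\mu_1\le m$. Conditions (i) and (ii) together do imply stabilization at $n+m$. The bound $N-\rho_1\le(N-\alpha_1)+(N-\beta_1)$, and hence $|\nu|\le|\lambda|+|\mu|$, is correct.

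The problem is the next step: it is \emph{not} true that $\nu_1\le\lambda_1+\mu_1$. Take $\lambda=\mu=(1,1)$. We have $\Lambda^2\QQ^N\cong V_{(N-2,1,1)}\oplus V_{(N-1,1)}$. The representation $(\Lambda^2\QQ^N)^{\otimes 2}$ splits into three monomial pieces, indexed by ordered pairs of $2$-subsets that are equal, meet in one point, or are disjoint. From this one finds:
\begin{itemize}
\item $V_{(N-3,3)}$ occurs exactly once in $(\Lambda^2\QQ^N)^{\otimes 2}$, namely in the piece induced from $\frakS_{N-3}$, via the Kostka number $K_{(N-3,3),(N-3,1,1,1)}=1$;
\item it does not occur in $V_{(N-1,1)}^{\otimes 2}$ or in $V_{(N-1,1)}\otimes V_{(N-2,1,1)}$.
\end{itemize}
Hence $V_{(N-3,3)}\subset V_{(N-2,1,1)}^{\otimes 2}$ for all $N\ge 6$; at $N=6$ one can check directly that $\langle\chi_{(4,1,1)}^2,\chi_{(3,3)}\rangle=1$. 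So $\nu=(3)$ has $\nu_1=3>2=\lambda_1+\mu_1$. Note that $|\nu|+\nu_1=6=|\lambda|+\lambda_1+|\mu|+\mu_1$. Only the combined inequality $|\nu|+\nu_1\le|\lambda|+\lambda_1+|\mu|+\mu_1$ is true, it is sharp here, and it cannot be obtained by adding separate bounds on $|\nu|$ and $\nu_1$. The same caution applies to (ii) if the coefficientwise bound you plan to cite involves $\nu_1$.

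\textbf{How to repair it.} One option is to cite Briand--Orellana--Rosas in its whole-product form: $s_{\lambda[N]}*s_{\mu[N]}$ stabilizes for $N\ge|\lambda|+\lambda_1+|\mu|+\mu_1$. This gives (i) and (ii) at once, so (i) no longer needs a separate argument. Alternatively, prove that bound directly:
\begin{enumerate}
\item For $N\ge|\lambda|+\lambda_1$, Jacobi--Trudi expanded along the first row gives, in the Grothendieck group, $V_{\lambda[N]}=\sum_\kappa(-1)^{|\lambda/\kappa|}M(V_\kappa)_N$, summed over $\kappa$ with $\lambda/\kappa$ a vertical strip.
\item Mackey gives $M(V_\kappa)\otimes M(V_\eta)\cong\bigoplus_k M(U_k)$. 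Here $U_k$ is induced to $\frakS_{|\kappa|+|\eta|-k}$ from $\frakS_{|\kappa|-k}\times\frakS_k\times\frakS_{|\eta|-k}$, with the Kronecker product of the restrictions on the middle factor.
\item By Littlewood--Richardson, every constituent $V_\theta$ of $U_k$ has $|\theta|=|\kappa|+|\eta|-k$ and $\theta_1\le\kappa_1+\eta_1+k$. Hence $|\theta|+\theta_1\le|\lambda|+\lambda_1+|\mu|+\mu_1$. This $+k$ in the first row, offset by $-k$ in the size, is exactly what breaks your separate inequality.
\item By Pieri, $V_{\nu[N]}$ occurs in $M(V_\theta)_N$ with multiplicity $1$ if $\theta/\nu$ is a horizontal strip and $N\ge|\nu|+\theta_1$, and $0$ otherwise. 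In the nonzero case $|\nu|+\nu_1\le|\nu|+\theta_1\le|\theta|+\theta_1$.
\end{enumerate}
So every multiplicity in $V_{\lambda[N]}\otimes V_{\mu[N]}$ is constant for $N\ge|\lambda|+\lambda_1+|\mu|+\mu_1$. This includes the vanishing when $|\nu|+\nu_1$ exceeds that number, so (i) and (ii) both follow. Your suggested FI-route through $M(a)\otimes M(b)=\bigoplus_c M(c)$ throws away the irreducible $V_\kappa$ that generates each piece. It therefore controls only $|\nu|$, not $|\nu|+\nu_1$.
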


\begin{corollary}\label{AiandCiStabilizationDegree}
    For any integer $i\geq 0$, the $\FB$-modules $A^i\otimes A^j$ and $C^i\otimes C^j$ stabilize at $3(i+j)+2$ and $3(i+j)$, respectively.
\end{corollary}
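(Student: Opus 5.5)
This is a direct combination of Theorem \ref{sharpStabilityBound} and Theorem \ref{tensorStabilizationDegree}, read with the convention that $j$ denotes a second nonnegative integer and the statement concerns $A^i\otimes A^j$ and $C^i\otimes C^j$.

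\textbf{Stabilization of the factors.} By Theorem \ref{sharpStabilityBound}, the $\FB$-module $A^i$ stabilizes at $3i+1$ and $A^j$ stabilizes at $3j+1$. Likewise $C^i$ stabilizes at $3i$ and $C^j$ stabilizes at $3j$.

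\textbf{Stabilization of the tensor products.} Applying Theorem \ref{tensorStabilizationDegree} with $V=A^i$ and $W=A^j$ shows that $A^i\otimes A^j$ stabilizes at
\[(3i+1)+(3j+1)=3(i+j)+2.\]
Applying it with $V=C^i$ and $W=C^j$ shows that $C^i\otimes C^j$ stabilizes at
\[3i+3j=3(i+j).\]

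\textbf{Remaining checks.} Two conventions need to be confirmed, and neither is a real obstacle.

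\emph{Monotonicity.} Stabilizing at $m$ implies stabilizing at every $m'>m$. This is immediate from Definition \ref{stabilizationDef}: if the decomposition at every $n>m$ is obtained by lengthening the first row of the decomposition at $m$, then the decomposition at $m'$ is itself of that form. Lengthening the first row of that decomposition from $m'$ to any $n>m'$ gives the same result as lengthening from $m$ directly.

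\emph{Degenerate degrees.} When $i=0$ (or $j=0$), $A^0$ and $C^0$ are the constant module $M(0)$. The sharp stability bounds of Theorem \ref{sharpStabilityBound} should be read as the degrees at which these modules stabilize, which is all Theorem \ref{tensorStabilizationDegree} needs as input. If a factor stabilizes earlier than stated, monotonicity still gives stabilization at the claimed degree.

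With these checks, both claimed stabilization degrees follow directly from the two cited theorems.
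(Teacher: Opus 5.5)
Your proposal is correct and is the paper's own argument: the paper proves the corollary by combining Theorem~\ref{sharpStabilityBound} with Theorem~\ref{tensorStabilizationDegree}, exactly as you do. Your monotonicity and $i=0$ checks are sound. They go beyond the paper, and they are worthwhile because $A^0$ in fact already stabilizes at $0$ rather than sharply at $1$.
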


\begin{proof}
    Theorem \ref{sharpStabilityBound} and Theorem \ref{tensorStabilizationDegree} immediately imply the corollary.
\end{proof}

Corollary \ref{AiandCiStabilizationDegree} gives a finite bound on the computations necessary to prove equivariant log concavity in low degrees.  We need only compute the irreducible decompositions of $(A^i\otimes A^j)_n$ and $(C^i\otimes C^j)_n$ up to the stabilization bounds $n=3(i+j)+2$ and $n=3(i+j)$ and verify (\ref{SELCCondition}) for these representations.

\subsection{FI\boldmath{$^\sharp$}-Modules}

Determining the structure of $A^i\otimes A^j$ and $C^i\otimes C^j$ by computing their underlying $\FB$-modules up to stabilization is a computation intensive process that requires determining Kronecker coefficients for large $\frakS_n$-representations.  We improve this approach by observing that both $A^i$ and $C^i$ have a stronger \boldmath$\FIS$\unboldmath\textbf{-module} structure:

\begin{definition}{\cite{CEF15}*{Definition 4.1.1}}\label{FISharpDef}
    Let $\FIS$ be the category of finite sets with \textbf{partially-defined injections}.  A partially-defined injection between finite sets $S$ and $T$ are subsets $A\subseteq S$ and $B\subseteq T$ together with a bijection $\phi:A\to B$.  An \boldmath{$\FIS$}\unboldmath\textbf{-module} is a functor $\FIS\to k$-mod for a commutative ring $k$. 
\end{definition}

\begin{remark}
    Let co-$\FI$ be the category $\FI^{\text{op}}$. One can think of $\FIS$-modules as having both an $\FI$-module and a co-$\FI$-module structure that are compatible with each other. An $\FIS$-module structure is very rigid, but we will see in Remark \ref{stableBound} that it is exactly this rigidity that allows us to completely determine the $\FIS$-module structure of $A^i\otimes A^j$ and $C^i\otimes C^j$ more efficiently.
\end{remark}

\begin{example}
    Both $A^i$ and $C^i$ are $\FIS$-modules because $\Conf(n,\RR^d)$ is a \textbf{homotopy FI\boldmath{$^\sharp$}\unboldmath-space} \cite{CEF15}*{Proposition 6.4.2}.  That is, it defines a functor FI$^\sharp\to $ hTop, the category of topological spaces with homotopy classes of maps as morphisms.  As discussed in Example \ref{exFImod}, the $\FI$-module structure on cohomology is induced by deleting points from a configuration. Additionally, while  ``adding a point at infinity" is not well defined at the level of spaces, it is up to homotopy. This induces a co-$\FI$ structure on cohomology.
\end{example}

\begin{remark}\label{ForgetfulFIStoFI}
    Just like in Remark \ref{FIhasFBStructure} there is a map $\FI\hookrightarrow\FIS$ taking only the morphisms where the bijective component is defined on the whole domain.  This induces another forgetful functor $\FIS$-mod $\to\FI$-mod.
\end{remark}

Ultimately, the categories $\FISmod$ and $\FBmod$ are equivalent \cite{CEF15}*{Theorem 4.1.5}.  To define this equivalence, we first consider a functor $M: \FBmod \to \FImod$.  Recall from Remark \ref{FIhasFBStructure}, that $\pi: \FImod\to \FBmod $ is the forgetful functor induced by the inclusion of $\FB\hookrightarrow \FI$.

\begin{definition}{\cite{CEF15}*{Definition 2.2.2}}
    The functor $M:\FBmod\to \FImod$ is the left-adjoint of the map $\pi:\FImod \to \FBmod$.  Explicitly, given an $\FB$-module $W$, the $\frakS_n$-representations $M(W)_n$ are given by:
    \begin{equation*}\label{defM}
        \displaystyle M(W)_n=\bigoplus_{a\leq n} \operatorname{Ind}_{\frakS_a\times \frakS_{n-a}}^{\frakS_n} W_a \boxtimes \mathbb{Q},
    \end{equation*}
    where $\frakS_{n-a}$ acts on $\QQ$ trivially.
\end{definition}

\begin{example}
    In Example \ref{FreeFIModDef} we defined the FI-module $M(m)$.  This FI-module is the image of the regular representation $\QQ[\frakS_m]$ under the functor $M$.
\end{example}

\begin{remark}\label{A1Generator}
    By using \cite{HR17}*{Corollary 2.10}, one can confirm that in Example \ref{stabilityEx} $A^1\cong M(V_{(2)})$ where we consider the representation $V_{(2)}$ as the following $\FB$-module:
    \[\left(V_{(2)}\right)_n = \begin{cases}V_{(2)} \quad\quad \text{if } n=2; \\
    0 \quad\quad\quad \text{else.}\end{cases}\]
    In fact, \cite{HR17}*{Corollary 2.10} allows us to compute the $\FB$-modules that map via $M$ to $A^i$ and $C^i$ for any $i$.
\end{remark}

It happens that $\FI$-modules in the image of $M$ always have a stronger $\FIS$-module structure \cite{CEF15}*{Example 4.1.4}, so we can take $M$ to be a functor $\FBmod \to \FIS\text{-mod}$.  An explicit description of the other half of the equivalence requires that we define the \textbf{span} of elements in an FI-module:

\begin{definition}{\cite{CEF15}*{Definition 2.3.1}}
    Let $V$ be an $\FI$-module and $A$ be some collection of elements in $\bigsqcup_{n}V_n$.  We say $\spn_V(A)$ is the minimal sub-$\FI$-module of $V$ that has every element of $A$.
\end{definition}

We now define a functor $H_0:\FImod \to\FBmod$. For any positive integer $n$ define $V_{<n}$ to be the $\FB$-module that contains only the data of the $\mathfrak{S}_i$-modules $V_i$ for $i<n$.  That is:

\begin{equation*}
    (V_{<n})_m = \begin{cases}
        V_m \quad\quad\quad~\text{if } m<n; \\
        0 \quad\quad\quad\quad \text{else.}
    \end{cases}
\end{equation*}

\begin{definition}{\cite{CEF15}*{Definition 2.3.7}} The functor $H_0:\FImod\to\FBmod$ is the left-adjoint of inclusion $\FBmod\hookrightarrow \FImod$ where any non-bijective maps are sent to the zero morphism of $\QQ$-modules.  Explicitly, given an $\FI$-module $V$ and finite set $S$:
\begin{equation}\label{H0expliciteq}
    H_0(V)_S=V_S/\spn(V_{<|S|})_S
\end{equation}
\end{definition}

In Remark \ref{ForgetfulFIStoFI} we mentioned a forgetful functor $\FISmod\to \FImod$.  If we restrict $H_0$ to $\FISmod$ by precomposing this forgetful functor, $H_0$ together with $M$ define an equivalence of categories:

\begin{theorem}\cite{CEF15}*{Theorem 4.1.5}\label{FBFISEquivalence}
    The category of $\FIS$-modules is equivalent to the category of $\FB$-modules via the functors $M: \FBmod \to \FISmod$ and $H_0:\FISmod\to \FBmod$.
\end{theorem}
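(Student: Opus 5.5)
The statement is the equivalence $\FISmod \simeq \FBmod$ via $M$ and $H_0$. I would prove it by constructing natural isomorphisms $H_0\circ M \cong \mathrm{id}_{\FBmod}$ and $M\circ H_0 \cong \mathrm{id}_{\FISmod}$.

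\textbf{Step 1: $H_0 M \cong \mathrm{id}$.} Fix an $\FB$-module $W$. By definition,
\[M(W)_n=\bigoplus_{a\le n}\operatorname{Ind}_{\frakS_a\times\frakS_{n-a}}^{\frakS_n}W_a\boxtimes\QQ,\]
so it is spanned by symbols $(f,w)$ with $f:[a]\hookrightarrow[n]$ an injection and $w\in W_a$. The span of $M(W)_{<n}$ inside $M(W)_n$ is exactly the sum of the summands with $a<n$. Hence by (\ref{H0expliciteq}) the quotient $H_0(M(W))_n$ is the $a=n$ summand, which is $W_n$. The identification is $\frakS_n$-equivariant and natural in $W$.

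\textbf{Step 2: a candidate map $M(H_0V)\to V$.} Fix an $\FIS$-module $V$. I want an $\frakS_n$-equivariant splitting $H_0(V)_n\hookrightarrow V_n$. Let $\iota_i:[n-1]\to[n]$ be the partial injections that forget the point $i$ in the co-$\FI$ direction. The natural choice of splitting is
\[\widetilde V_n:=\bigcap_{i}\ker\big(V(\iota_i):V_n\to V_{n-1}\big).\]
First check that $\widetilde V_n$ is $\frakS_n$-stable; this holds because $\frakS_n$ permutes the $\iota_i$. Then the $\FI$-structure extends the inclusion $\widetilde V_n\subseteq V_n$ to a map $M(\widetilde V)\to V$ by the adjunction $M\dashv\pi$. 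This map is compatible with the co-$\FI$ structure because $\FIS$ morphisms factor into an injection followed by a projection, and the defining relations of $\FIS$ are functorial.

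\textbf{Step 3: $V_n=\widetilde V_n\oplus \spn(V_{<n})_n$.} This is the main obstacle. For $S\subseteq[n]$, let $e_S\in\operatorname{End}(V_n)$ be the composite of the partial identity on $S$, meaning project to $V_S$ and then include back. The $\FIS$ relations show that the $e_S$ are commuting idempotents with $e_Se_T=e_{S\cap T}$. Möbius inversion then gives orthogonal idempotents
\[\epsilon_S=\sum_{T\subseteq S}(-1)^{|S\setminus T|}e_T,\qquad \sum_S\epsilon_S=\mathrm{id}.\]
The image of $\epsilon_{[n]}$ is $\widetilde V_n$: indeed $\epsilon_{[n]}$ kills every $e_T$-image with $T\neq[n]$. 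The remaining images lie in $\spn(V_{<n})_n$, since $e_T$ factors through $V_T$. Hence $\widetilde V_n\cong H_0(V)_n$. The image of each $\epsilon_S$ is the translate of $\widetilde V_{S}$ under the inclusion $S\hookrightarrow[n]$, so
\[V_n\cong\bigoplus_S \widetilde V_{|S|},\]
which is precisely $M(H_0V)_n$. Therefore the map of Step 2 is an isomorphism.

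\textbf{Step 4: naturality.} Both constructions are functorial in morphisms of $\FIS$-modules, because such morphisms commute with every $e_S$. Together with Step 1, this completes the equivalence. The delicate points are verifying the idempotent identities from the composition rule in $\FIS$, and checking that the $\FIS$-structure on $M(W)$ built in \cite{CEF15}*{Example 4.1.4} matches the one produced in Step 3.
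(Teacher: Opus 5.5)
Your proposal is correct and is essentially the standard proof of this result, which the paper does not prove itself but cites from \cite{CEF15}*{Theorem 4.1.5}: the joint kernel $\widetilde V_n$ of the projections is the canonical complement to $\spn(V_{<n})_n$, and the commuting idempotents $e_S$ given by partial identities split $V_n$ as the direct sum of the images of the $\widetilde V_S$ under the inclusions $S\hookrightarrow[n]$. The one step to state more carefully is the co-$\FI$ compatibility in Step 2: it holds not because morphisms factor, but because each $\widetilde V_S$ is killed by every projection to a proper subset of $S$, which matches the rule in $M(W)$ that a partial injection kills the $S$-summand unless it is defined on all of $S$ (this check is genuinely needed, since an $\FIS$-structure is not determined by its underlying $\FI$-structure).
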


\begin{corollary}
    For every $\text{FI}^\sharp$-module $V$ there is an $\FB$-module $W$ such that $V=M(W)$.  In particular, $W=H_0(V)$.
\end{corollary}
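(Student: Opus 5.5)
The corollary follows directly from Theorem \ref{FBFISEquivalence}, so the plan is to unpack the equivalence of categories. Given an $\FIS$-module $V$, set $W := H_0(V)$, where $V$ is first regarded as an $\FI$-module via the forgetful functor of Remark \ref{ForgetfulFIStoFI}. Since $M$ and $H_0$ are mutually quasi-inverse equivalences, there is a natural isomorphism of functors $M\circ H_0 \cong \mathrm{id}_{\FISmod}$. Evaluating it at $V$ gives an isomorphism of $\FIS$-modules $V \cong M(H_0(V)) = M(W)$. This is the claimed statement, where ``$=$'' is read as ``is isomorphic to'' in the usual way.

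To make the step concrete, I would check two things. First, both composites are defined: $H_0$ is applied to the underlying $\FI$-module of $V$ and returns an $\FB$-module. Second, $M(W)$ carries its $\FIS$-structure from \cite{CEF15}*{Example 4.1.4}, so the isomorphism $V\cong M(W)$ is one of $\FIS$-modules and not merely of $\FI$-modules.

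For the identification ``in particular, $W = H_0(V)$'', I would add a uniqueness remark. Suppose $V\cong M(W')$ for some other $\FB$-module $W'$. Then $H_0(V)\cong H_0(M(W'))\cong W'$, using the other natural isomorphism $H_0\circ M\cong \mathrm{id}_{\FBmod}$. So the $\FB$-module $W$ is determined up to isomorphism and must be $H_0(V)$.

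There is no real obstacle, since everything is contained in \cite{CEF15}*{Theorem 4.1.5}. The only point needing care is the definition of $H_0$ on $\FIS$-modules. It is defined on $\FI$-modules, so on $\FIS$-modules it means $H_0$ precomposed with the forgetful functor, and the equivalence is stated for exactly this composite.
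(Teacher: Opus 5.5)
Your proposal is correct and matches the paper. The paper gives no separate argument and presents the corollary as an immediate consequence of Theorem \ref{FBFISEquivalence} (that is, \cite{CEF15}*{Theorem 4.1.5}). Evaluating $M\circ H_0\cong \mathrm{id}_{\FISmod}$ at $V$, together with your uniqueness remark via $H_0\circ M\cong\mathrm{id}_{\FBmod}$, is exactly the intended unpacking.
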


The main result of this paper requires checking (\ref{SELCCondition}) at the level of $\FIS$-modules.  Leveraging the equivalence of $\FIS$-mod and $\FB$-mod, we instead calculate the $\FB$-modules $W_{i,\ell}$ and $W_{j,k}$ so that $M(W_{i,\ell})=A^i\otimes A^\ell$ and $M(W_{j,k})=A^j\otimes A^k$. Then, we can check the containment $W_{i,\ell}\hookrightarrow W_{j,k}$ of $\FB$-modules to imply containment of $\FIS$-modules. In other words, we need to calculate $H_0(A^i\otimes A^\ell)$ and $H_0(A^j\otimes A^k)$ and the corresponding $\FB$-modules for $C^\bullet$ as well.

To this end, we need an explicit expression for $H_0(V)_i$ for any $\FIS$-module $V$ and positive integer $i$.  When $V$ is an $\FIS$-module we can use Theorem \ref{FBFISEquivalence} to simplify (\ref{H0expliciteq}). We have a recursive definition for $H_0(V)$ given by:

\begin{equation}
    H_0(V)_n=V_n/M(H_0(V)_{<n})_n
\end{equation}

Before performing an explicit calculation, we need to confirm that $H_0(V)_n$ is eventually zero.  Otherwise, the process of recursively calculating $H_0(V)$ will not terminate.

\begin{lemma}\label{generatingBound}
    Let $j$ and $k$ be positive integers and $m>2(k+j)$.  Then $H_0(A^j\otimes A^k)_m=0$ and $H_0(C^{j} \otimes C^{k})_m=0$.
\end{lemma}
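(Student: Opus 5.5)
The plan is to show that $A^j$ and $A^k$ are generated in degrees at most $2j$ and $2k$, and that a tensor product of $\FIS$-modules generated in degrees $\le a$ and $\le b$ is generated in degrees $\le a+b$. Since $V$ is generated in degrees $\le d$ exactly when $H_0(V)_m=0$ for $m>d$, this gives the lemma.

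\textbf{Step 1: generation degree of the factors.} Cohomology classes of degree $i$ in $A_n$ (and in $C_n$ after regrading) are spanned by monomials $\omega_{a_1b_1}\cdots\omega_{a_ib_i}$ in the Arnold classes. Each monomial involves at most $2i$ of the points. Each generator $\omega_{ab}$ is the pullback of the class in $H^\ast(\Conf(2,\RR^d))$ along the forgetful map. Hence the monomial is the image, under an $\FI$-morphism $[r]\hookrightarrow[n]$ with $r\le 2i$, of an element of $A^i_r$. So $\spn(A^i_{\le 2i})=A^i$, i.e.\ $H_0(A^i)_m=0$ for $m>2i$, and likewise $H_0(C^i)_m=0$ for $m>2i$. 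This is consistent with Remark \ref{A1Generator}, where $A^1=M(V_{(2)})$. Alternatively, one could read this off directly from \cite{HR17}*{Corollary 2.10}.

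\textbf{Step 2: tensor products.} Let $V=M(W)$ and $U=M(W')$ be $\FIS$-modules with $W$ supported in degrees $\le a$ and $W'$ supported in degrees $\le b$. Write
\[
M(W)_n=\bigoplus_{S\subseteq[n]} W_S, \qquad M(W')_n=\bigoplus_{T\subseteq[n]} W'_T,
\]
where $W_S$ is the copy of $W_{|S|}$ induced from the subset $S$. Then
\[
(V\otimes U)_n=\bigoplus_{S,T} W_S\otimes W'_T.
\]
Each summand $W_S\otimes W'_T$ is the image, under the inclusion $S\cup T\hookrightarrow[n]$, of the corresponding summand in $(V\otimes U)_{S\cup T}$. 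This holds because the $\FI$-maps of $M(W)$ send the summand indexed by $S$ identically onto the summand indexed by $f(S)$. Since $|S\cup T|\le a+b$, the tensor product is generated in degrees $\le a+b$. Equivalently, $H_0(V\otimes U)_m=0$ for $m>a+b$, using \eqref{H0expliciteq}. Applying this with $a=2j$ and $b=2k$, via Theorem \ref{FBFISEquivalence} and Step 1, gives the lemma for both $A$ and $C$.

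\textbf{Main obstacle.} The main care is in Step 2. One must check that the $\FI$-structure on the tensor product, given by Definition \ref{tensorFIFBdef}, really acts summand-wise by relabelling subsets. This follows from the explicit description of $M(W)$ as $\bigoplus_{a}\operatorname{Ind}(W_a\boxtimes\QQ)$, in which an injection $f$ sends the $S$-summand to the $f(S)$-summand. Only the $\FI$-structure is used here, so the $\FIS$-structure is not needed beyond knowing that each factor is of the form $M(W)$.
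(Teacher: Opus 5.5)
Your proposal is correct and follows the same two-step structure as the paper. You bound the generation degree of each factor $A^i$, $C^i$ by $2i$, then use that generation degrees add under tensor product; where the paper simply cites \cite{HR17}*{Corollary 2.10} and \cite{CEF15}*{Proposition 2.3.6, Remark 2.3.8}, you prove both ingredients directly, via Arnold monomials and the subset decomposition of $M(W)\otimes M(W')$.
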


\begin{proof}
    By \cite{HR17}*{Corollary 2.10} we have that $H_0(A^j)$ and $H_0(C^{j})$ are non-zero in degrees $j+1$ to $2j$. Suppose $V$ and $W$ are finitely-generated $\FI$-modules such that $H_0(V)$ and $H_0(W)$ are zero in all degrees strictly greater than $n$ and $m$, respectively.  Then \cite{CEF15}*{Proposition 2.3.6, Remark 2.3.8}  gives that $H_0(V\otimes W)$ is zero in degrees strictly greater than $n+m$. Hence, both $H_0(A^j\otimes A^k)$ and $H_0(C^{j}\otimes C^{k})$ are zero in degrees strictly greater that $2(i+j)$ as desired.
\end{proof}

\begin{remark}\label{stableBound}
    This bound is lower than the stabilization degree for these $\FB$-modules, which are $3(j+k)+2$ for $A^j\otimes A^k$ and $3(j+k)$ for $C^j\otimes C^k$ by Corollary \ref{AiandCiStabilizationDegree}.
\end{remark}

\begin{example}\label{H0Calculation}
    We will compute $H_0(A^1\otimes A^1)$.  We noted in Remark \ref{A1Generator} that $A^1 =M(V_{(2)})$ so $(A^1\otimes A^1)_i$ will first be non-zero in degree 2. In general, if the minimal degree of a generator for $V$ is $n$ and for $W$ the minimal degree is $m$, we know that $H_0(V\otimes W)_i=0$ for all $i<\max(n,m)$.  We also know from Lemma \ref{generatingBound} that $H_0(A^1\otimes A^1)_i=0$ for any $i>4$ so we have a finite number of computations that we can perform in SageMath \cite{Sage}:
    \begin{center}
    \begin{tabular}{c|c|c|c}
        $i$ & 2 & 3 & 4  \\
        \hline\hline
        &&& \\
        $(A^1\otimes A^1)_i$ & $V_{(2)}$ & $V_{(1,1,1)}\oplus V_{(2,1)}^{\oplus 3}\oplus V_{(3)}^{\oplus 2}$ & $V_{(1,1,1,1)}\oplus V_{(2,1,1)}^{\oplus 3}\oplus V_{(2,2)}^{\oplus 4}\oplus V_{(3,1)}^{\oplus 5}\oplus V_{(4)}^{\oplus 3}$ \\
        
        \hline
        &&& \\
        $M(H_0(A^1\otimes A^1)_{<i})_i$ & $0$ & $V_{(2,1)}\oplus V_{(3)}$ & $V_{(1,1,1,1)}\oplus V_{(2,1,1)}^{\oplus 3} \oplus V_{(2,2)}^{\oplus 3} \oplus V_{(3,1)}^{\oplus 4} \oplus V_{(4)}^{\oplus 2}$\\
        \hline 
        &&& \\
        $H_0(A^1\otimes A^1)_i$ & $V_{(2)}$ & $V_{(2,1)}^{\oplus 2} \oplus V_{(3)}$ & $V_{(2,2)}\oplus V_{(3,1)}\oplus V_{(4)}$ \\
        \hline
    \end{tabular}
    \end{center}
    Hence, $A^1\otimes A^1 = M\left(V_{(2)}\oplus V_{(2,1)}^{\oplus2}\oplus V_{(3)}\oplus V_{(2,2)}\oplus V_{(3,1)}\oplus V_{(4)}\right)$.
\end{example}

\begin{example}\label{H0ComputationEx}
    These computations become complex quickly.  For example, the first calculations required to prove our theorem are $H_0(A^1\otimes A^3)$ and $H_0(A^2\otimes A^2)$.  Both are zero in degrees strictly greater than 8 and less than 3 so we need only consider the following representations:
    \begin{center}
        \begin{tabular}{c|p{2.8in}|p{2.8in}}
            $n$ & $H_0(A^1\otimes A^3)_n$ & $H_0(A^2\otimes A^2)_n$ \\
            \hline\hline
            && \\
             3 & $0$ & $V_{(1,1,1)}\oplus V_{(2,1)} \oplus V_{(3)}$ \\
             && \\
             \hline
             && \\
             4 & $V_{(1,1,1,1)}\oplus V_{(2,1,1)}^{\oplus 5} \oplus V_{(2,2)}^{\oplus 2} \oplus V_{(3,1)}^{\oplus 5} \oplus V_{(4)}$ & $V_{(1,1,1,1)}^{\oplus 4}\oplus V_{(2,1,1)}^{\oplus 13} \oplus V_{(2,2)}^{\oplus 9} \oplus V_{(3,1)}^{\oplus 13} \oplus V_{(4)}^{\oplus 5}$ \\
             && \\
             \hline 
             && \\
             5 & $V_{(1,1,1,1,1)}^{\oplus 2} \oplus V_{(2,1,1,1)}^{\oplus 9} \oplus V_{(2,2,1)}^{\oplus 13} \oplus V_{(3,1,1)}^{\oplus 16} \oplus V_{(3,2)}^{\oplus 14} \oplus V_{(4,1)}^{\oplus 12} \oplus V_{(5)}^{\oplus 3}$ & $V_{(1,1,1,1,1)}^{\oplus 4} \oplus V_{(2,1,1,1)}^{\oplus 19} \oplus V_{(2,2,1)}^{\oplus 26} \oplus V_{(3,1,1)}^{\oplus 33} \oplus V_{(3,2)}^{\oplus 29} \oplus V_{(4,1)}^{\oplus 25} \oplus V_{(5)}^{\oplus 7}$  \\
             && \\
             \hline
             && \\
             6 & $V_{(2, 1, 1, 1, 1)}^{\oplus 2} \oplus _{(2, 2, 1, 1)}^{\oplus 9} \oplus V_{(2, 2, 2)}^{\oplus 4} \oplus V_{(3, 1, 1, 1)}^{\oplus 10} \oplus V_{(3, 2, 1)}^{\oplus 21} \oplus V_{(3, 3)}^{\oplus 9} \oplus V_{(4, 1, 1)}^{\oplus 16} \oplus V_{(4, 2)}^{\oplus 13} \oplus V_{(5, 1)}^{\oplus 9} \oplus V_{(6)}$ & $V_{(2, 1, 1, 1, 1)}^{\oplus 5} \oplus _{(2, 2, 1, 1)}^{\oplus 14} \oplus V_{(2, 2, 2)}^{\oplus 10} \oplus V_{(3, 1, 1, 1)}^{\oplus 19} \oplus V_{(3, 2, 1)}^{\oplus 36} \oplus V_{(3, 3)}^{\oplus 13} \oplus V_{(4, 1, 1)}^{\oplus 26} \oplus V_{(4, 2)}^{\oplus 26} \oplus V_{(5, 1)}^{\oplus 16} \oplus V_{(6)}^{\oplus 6}$ \\
             && \\

             \hline 

             && \\
             7 & $V_{(2,2,2,1)}\oplus V_{(3,1,1,1,1)}\oplus V_{(3,2,1,1)}^{\oplus5}\oplus V_{(3,2,2)}^{\oplus 4}\oplus V_{(3,3,1)}^{\oplus 5}\oplus V_{(4,1,1,1)}^{\oplus 4} \oplus V_{(4,2,1)}^{\oplus 10}\oplus V_{(4,3)}^{\oplus 5}\oplus V_{(5,1,1)}^{\oplus 5}\oplus V_{(5,2)}^{\oplus 5}\oplus V_{(6,1)}^{\oplus 2}$ & $V_{(2,2,1,1,1)}\oplus V_{(2,2,2,1)}^{\oplus 2}\oplus V_{(3,1,1,1,1)}\oplus V_{(3,2,1,1)}^{\oplus8}\oplus V_{(3,2,2)}^{\oplus 7}\oplus V_{(3,3,1)}^{\oplus 8}\oplus V_{(4,1,1,1)}^{\oplus 6} \oplus V_{(4,2,1)}^{\oplus 15}\oplus V_{(4,3)}^{\oplus 8}\oplus V_{(5,1,1)}^{\oplus 8}\oplus V_{(5,2)}^{\oplus 9}\oplus V_{(6,1)}^{\oplus 4}\oplus V_{(7)}$ \\

             && \\
             \hline
             && \\
             8 & $V_{(3,3,2)}\oplus V_{(4,2,1,1)}\oplus V_{(4,3,1)}^{\oplus2}\oplus V_{(5,1,1,1)}\oplus V_{(5,2,1)}\oplus V_{(5,3)}\oplus V_{(6,1,1)}$ & $V_{(3,3,1,1)} \oplus V_{(3,3,2)} \oplus V_{(4,2,1,1)} \oplus V_{(4,2,2)}\oplus V_{(4,3,1)}^{\oplus 2} \oplus V_{(4,4)}\oplus V_{(5,1,1,1)}\oplus V_{(5,2,1}^{\oplus 2}\oplus V_{(5,3)}\oplus V_{(6,1,1)}\oplus V_{(6,2)}$\\

             && \\
             \hline
        \end{tabular}
    \end{center}
    We can see that $H_0(A^1\otimes A^3)_n \subseteq H_0(A^2\otimes A^2)_n$ for all $n$ and thus we have containment of $\FB$-modules.  Then the equivalence of categories from Theorem \ref{FBFISEquivalence} gives that $A^1\otimes A^3 \subseteq A^2\otimes A^2$ as $\FIS$-modules, as desired.
\end{example}

\begin{remark}
    If we performed the computation in Example \ref{H0Calculation} using the approach of \cite{MMPR23}*{Theorem 1.7}, we would have to calculate $A^1\otimes A^3$ and $A^2\otimes A^2$ explicitly up to stabilization.  Theorem \ref{tensorStabilizationDegree} says stabilization does not occur until  degree 13.  In contrast, our computation terminates at degree 8.

    Further, by calculating $H_0(A^1\otimes A^3)$ and $H_0(A^2\otimes A^2)$ instead of the $\FB$-modules $A^1\otimes A^3$ and $A^2\otimes A^2$, not only did we perform fewer calculations, but we also conclude that there is a containment of the $\FIS$-modules $A^1\otimes A^3\subseteq A^2\otimes A^2$
\end{remark}

\begin{example}
    The containment of $\text{FI}^\sharp$-modules we prove in Theorem \ref{maintheorem} is indeed stronger than that for $\text{$\FB$}$-modules. Recall that $\pi: \text{FI-mod} \to \text{$\FB$-mod}$ is the forgetful functor induced by the inclusion $\text{$\FB$} \hookrightarrow \text{FI}$. We will exhibit  $\FI$-modules $Y,Z$ such that $\pi(Y)\supseteq \pi(Z)$ as $\text{$\FB$-modules}$, but $Y\not\supseteq Z$ as $\FI$-modules. Thus, Conjecture \ref{mainConjecture} is stronger than \cite{MMPR23}*{Conjecture 1.6}.

    Consider the $\FI$-modules $Y=M(V_{(1)}) \oplus M(V_{(2,1)})$ and $Z= M(V_{(2)})$.  Clearly, $Y\not\supseteq Z$ as $\FI$-modules.  Then, by \cite{HR17}*{Lemma 2.2} we need only confirm that $\pi(Y)_n \supseteq \pi(Z)_n$ for $n\leq 5$, at which point both sequences stabilize.  But we have:
\begin{center}
    \begin{tabular}{c||c|c}
         $n$ & $\pi(Y)_n$ & $\pi(Z)_n$ \\
         \hline
         && \\
         $1$ & $V_{(1)}$ & $0$ \\
        \hline
        && \\
         $2$ & $V_{(1,1)} \oplus V_{(2)}$ & $V_{(2)}$ \\
         \hline
         && \\
         $3$ & $V_{(2,1)}^{\oplus 2} \oplus V_{(3)}$ & $V_{(2,1)} \oplus V_{(3)}$ \\
         \hline
         && \\
         $4$ & $V_{(2,1,1)}\oplus V_{(2,2)}\oplus V_{(3,1)}^{\oplus2}\oplus V_{(4)}$ & $V_{(2,2)}\oplus V_{(3,1)}\oplus V_{(4)}$ \\
         \hline
         && \\
         $5$ & $V_{(2,2,1)}\oplus V_{(3,1,1)} \oplus V_{(3,2)} \oplus V_{(4,1)}^{\oplus2} \oplus V_{(5)}$ & $V_{(3,2)}\oplus V_{(4,1)} \oplus V_{(5)}$\\
         \hline
    \end{tabular}
\end{center}
and so $\pi(Y)\supseteq \pi(Z)$.
\end{example}

\section{Proof}

We are now able to give the computational proof of Theorem \ref{maintheorem}.  The code used to prove this theorem is available at \url{https://github.com/bhoman-math/ELCandFISharp}.

\begin{proof}[Proof of Theorem \ref{maintheorem}:] 
As in Example \ref{H0Calculation} we calculate $H_0(A^j\otimes A^k)$ and $H_0(A^i\otimes A^\ell)$ for all $i<j\leq k<\ell$ when $i+\ell=j+k\leq 19$.  This reduces to a finite number of computations that can be run in SageMath \cite{Sage}.  We then confirm that $H_0(A^j\otimes A^k)\supseteq H_0(A^i\otimes A^\ell)$ as $\FB$-modules.  We perform the same calculations with $H_0(C^j\otimes C^k)$ and $H_0(C^i\otimes C^\ell)$. Then the equivalence of categories given in Theorem \ref{FBFISEquivalence} proves our theorem in these low degrees.
\end{proof}

\bibliographystyle{amsrefs}
\bibliography{refs}

@article {MMPR23,
    AUTHOR = {Matherne, Jacob P. and Miyata, Dane and Proudfoot, Nicholas
              and Ramos, Eric},
     TITLE = {Equivariant log concavity and representation stability},
   JOURNAL = {Int. Math. Res. Not. IMRN},
  FJOURNAL = {International Mathematics Research Notices. IMRN},
      YEAR = {2023},
    NUMBER = {5},
     PAGES = {3885--3906},
      ISSN = {1073-7928,1687-0247},
   MRCLASS = {20F55 (05B35 20C30 52C35 55R80)},
  MRNUMBER = {4565658},
       DOI = {10.1093/imrn/rnab352},
       URL = {https://doi.org/10.1093/imrn/rnab352},
}

@article {HR17,
    AUTHOR = {Hersh, Patricia and Reiner, Victor},
     TITLE = {Representation stability for cohomology of configuration
              spaces in {$\Bbb R^d$}},
      NOTE = {With an appendix written jointly with Steven Sam},
   JOURNAL = {Int. Math. Res. Not. IMRN},
  FJOURNAL = {International Mathematics Research Notices. IMRN},
      YEAR = {2017},
    NUMBER = {5},
     PAGES = {1433--1486},
      ISSN = {1073-7928,1687-0247},
   MRCLASS = {20C15 (55R80)},
  MRNUMBER = {3658170},
MRREVIEWER = {Sadok\ Kallel},
       DOI = {10.1093/imrn/rnw060},
       URL = {https://doi.org/10.1093/imrn/rnw060},
}

@article {CEF15,
    AUTHOR = {Church, Thomas and Ellenberg, Jordan S. and Farb, Benson},
     TITLE = {F{I}-modules and stability for representations of symmetric
              groups},
   JOURNAL = {Duke Math. J.},
  FJOURNAL = {Duke Mathematical Journal},
    VOLUME = {164},
      YEAR = {2015},
    NUMBER = {9},
     PAGES = {1833--1910},
      ISSN = {0012-7094,1547-7398},
   MRCLASS = {20C30 (05E10 18B99 55R80)},
  MRNUMBER = {3357185},
MRREVIEWER = {Nadia\ P.\ Mazza},
       DOI = {10.1215/00127094-3120274},
       URL = {https://doi.org/10.1215/00127094-3120274},
}

@misc{Sage,
    TITLE = {SageMath Mathematical Software System}, 
    URL ={https://www.sagemath.org./}, 
    AUTHOR={The Sage Developers}, 
    YEAR={2021}
}

@article {KPY17,
    AUTHOR = {Gedeon, Katie and Proudfoot, Nicholas and Young, Benjamin},
     TITLE = {The equivariant {K}azhdan-{L}usztig polynomial of a matroid},
   JOURNAL = {J. Combin. Theory Ser. A},
  FJOURNAL = {Journal of Combinatorial Theory. Series A},
    VOLUME = {150},
      YEAR = {2017},
     PAGES = {267--294},
      ISSN = {0097-3165,1096-0899},
   MRCLASS = {05B35 (05E05 05E18 20C30)},
  MRNUMBER = {3645577},
MRREVIEWER = {Eva\ Ferrara Dentice},
       DOI = {10.1016/j.jcta.2017.03.007},
       URL = {https://doi.org/10.1016/j.jcta.2017.03.007},
}

@article {C2012,
    AUTHOR = {Church, Thomas},
     TITLE = {Homological stability for configuration spaces of manifolds},
   JOURNAL = {Invent. Math.},
  FJOURNAL = {Inventiones Mathematicae},
    VOLUME = {188},
      YEAR = {2012},
    NUMBER = {2},
     PAGES = {465--504},
      ISSN = {0020-9910,1432-1297},
   MRCLASS = {55R80},
  MRNUMBER = {2909770},
MRREVIEWER = {Donald\ W.\ Kahn},
       DOI = {10.1007/s00222-011-0353-4},
       URL = {https://doi.org/10.1007/s00222-011-0353-4},
}

\end{document}